\def\seq#1#2#3{#1_{#2},\,\ldots,#1_{#3}}
\def\m{\mathfrak m}
\def\vv{{\underline{v}}}
\def\nuv{{\underline{\nu}}}
\def\tt{{\underline{t}}}
\def\ww{\underline{w}}
\newcommand{\Var}{\ensuremath{\mathcal{V}_{\mathbb{C}}}}
\def\1{\underline{1}}
\def\P{\mathbb P}
\def\L{\mathbb L}
\def\LLL{\mathbb L}
\def\Z{\mathbb Z}
\def\C{\mathbb C}
\def\S{\mathbb S}
\def\OO{{\cal O}}
\def\M{\widehat{\cal M}}
\def\S{\mathbb S}
\def\nunu{\underline{\nu}}
\newtheorem{theorem}{Theorem}
\newtheorem{corollary}{Corollary}
\newtheorem{proposition}{Proposition}
\newenvironment{definition}
{\smallskip\noindent{\bf Definition\/}:}{\smallskip\par}
\newenvironment{remark}
{\smallskip\noindent{\bf Remark\/}.}{\smallskip\par}
\newenvironment{remarks}
{\smallskip\noindent{\bf Remarks\/}.}{\smallskip\par}
\newenvironment{proof}
{\noindent{\bf Proof\/}.}{{ $\Box$}\smallskip\par}
\newenvironment{Proof}
{\noindent{\bf Proof\/}}{{ $\Box$}\smallskip\par}
\title{Hilbert function, generalized Poincar\'e series and topology of plane valuations}
\author{
A.~Campillo
\and F.~Delgado \and S.M.~Gusein-Zade
\thanks{
Math. Subject Class. 14B05, 16W70, 13A18.
Keywords: filtrations, Hilbert functions, Poincar\'e series, plane valuations.
Partially supported by the grant MTM2007-64704 (with the help of
FEDER Program) and MTM2012-36917-C03-01 / 02.
Third author is also partially supported by the
Russian government grant 11.G34.31.0005, RFBR--13-01-00755,
NSh--4850.2012.1 and Simons-IUM fellowship.
} }
\date{}
\begin{document}
\def\eps{\varepsilon}

\maketitle

\begin{abstract}
To a multi-index filtration (say, on the ring of germs of functions on a germ of a
complex analytic variety)
one associates several invariants: the Hilbert function, the Poincar\'e series, the
generalized Poincar\'e series,
and the generalized semigroup Poincar\'e series. The Hilbert function and the
generalized
Poincar\'e series are equivalent in the sense that each of them determines the
other one. We show that for
a filtration on the ring of germs of holomorphic functions in two variables defined
by a collection of plane
valuations both of them are equivalent to the generalized semigroup Poincar\'e
series and determine the topology
of the collection of valuations, i.e. the topology of its minimal resolution.
\end{abstract}

\section*{Introduction}\label{sec0}

Let $\OO_{X,0}$ be the ring of germs of functions on a germ $(X,0)$ of a complex
analytic variety. A (one-index) filtration
by vector subspaces on the ring $\OO_{X,0}$
$$
\OO_{X,0} = J(0)\supset J(1) \supset J(2)\supset \cdots
$$
can be described by the function $\nu: \OO_{X,0}\to \Z_{\ge 0}\cup\{\infty\}$ defined
by $\nu(g)=\sup \{i : g\in J(i)\}$. This function possesses the properties:
\begin{enumerate}
\item[1)] $\nu(\lambda g) = \nu(g)$ for $\lambda\in \C^*$,
$\nu(0)=\infty$;
\item[2)] $\nu(g_1+g_2)\ge \min \{\nu(g_1),\nu(g_2)\}$.
\end{enumerate}
Functions $\nu : \OO_{X,0}\to \Z_{\ge 0}\cup \{\infty\}$ with the properties 1) and
2) are called {\em order functions}. If, moreover,
\begin{enumerate}
\item[3)] $\nu(g_1 g_2)= \nu(g_1)+\nu(g_2)$,
\end{enumerate}
the function $\nu$ is a valuation on $\OO_{X,0}$.

A multi-index filtration on the ring $\OO_{X,0}$ will be defined by a
collection $\seq{\nu}1r$ of order functions on $\OO_{X,0}$:
for $\vv=(\seq v1r)\in \Z^r$
$$
J(\vv) = \{g\in \OO_{X,0} : \nu_i(g)\ge v_i \mbox{ for }
i=1,\ldots,r \}\;
.
$$
(It is sufficient to define $J(\vv)$ for $\vv\in \Z_{\ge 0}^{r}$.
However, below it will be convenient to assume it to be defined for all
$\vv\in\Z^r$).

\begin{remarks}
{\bf 1.} One can consider a different notion of a multi-index filtration defined by a
system of subspaces $J(\vv)$ numbered by $\vv\in \Z_{\ge 0}^r$ such that for
$\vv\ge \ww$, $J(\vv)\subset J(\ww)$
(if $\vv=(\seq{v}1r)$,  $\ww = (\seq{w}1r)$,
$\vv\ge \ww$ if and only if
$v_i\ge w_i$ for all $i=1,\ldots, r$). A multi-index filtration
in this sense is
defined by a collection of order functions if and only if for any $\vv$ and
$\ww$ in
$\Z_{\ge 0}^r$ one has that
$J(\vv) \cap J(\ww) = J(\max (\vv,\ww))$, where
$\max(\vv,\ww) = (\max (v_1,w_1), \ldots, \max(v_r,w_r))$
($\vv=(\seq{v}1r)$, $\ww = (\seq{w}1r)$).

{\bf 2.}
Below we shall also consider more general order functions
$\nu: \OO_{X,0}\to S\cup \{\infty\}$ with values in an ordered semigroup $S$ and
therefore filtrations indexed by
$\vv = (\seq v1r)\in \S = S_1\times \cdots\times S_r$.
(The semigroup $\S$ is partially ordered by the relation $\vv\ge
\ww$ if and only if
$v_i\ge w_i$ for all $i$).
\end{remarks}

To a multi-index filtration $\{J(\vv)\}$ ($\vv\in \Z^r$) one
associates several invariants:

{\bf 1.}
The {\em Hilbert function}: $h(\vv)=\dim \OO_{X,0}/J(\vv)$.
One can describe the Hilbert function by the power series
$\widetilde {H}(\tt) = \sum\limits_{\vv\in \Z_{\ge 0}^r}
h(\vv)\;\tt^{\,\vv}$
(here $\tt = (\seq t1r)$ and $\tt^{\,\vv} = t_1^{v_1}\cdots
t_r^{v_r}$) or by a sort of a Laurent series
$H(\tt) = \sum\limits_{\vv\in \Z^r} h(\vv)\;\tt^{\,\vv}$.
They are defined if the subspaces $J(\vv)$ have finite codimensions.

{\bf 2.}
The {\em Poincar\'e series} $P(\tt)$ defined in \cite{CDK} (see also \cite{IJM2003}):
\begin{equation}\label{eq0}
P(\tt) = \frac{L(\tt) \prod_{i=1}^r(t_i-1)}{t_1\cdots t_r -1}
\end{equation}
where $L(\tt) = \sum\limits_{\vv\in \Z^r} \dim
(J(\vv)/J(\vv+\1))\, \tt^{\,\vv}$, $\1=(1, \ldots, 1)$. (One has $\tt\cdot L(\tt)= (1-\tt)
H(t)$.)

The Poincar\'e series can be also defined as an integral with respect to the Euler
characteristic (see \cite{IJM2003}):
\begin{equation}\label{eq1}
P(\tt) = \int_{\P \OO_{X,0}} \tt^{\,\nunu(g)}\, d\chi\,,
\end{equation}
where $\nunu(g)=(\nu_1(g), \ldots, \nu_r(g))$, $t^{\infty}$ is assumed to be equal to zero.

If $(X,0)=(\C^2,0)$ and $\seq{\nu}1r$ are curve valuations of
rank one (type I.1 in the notations of \cite{IJM2010}) corresponding
to
irreducible plane curve singularities $(C_i,0)\subset (\C^2,0)$,
$i=1,\ldots,r$, then the Poincar\'e series $P(\tt)$ coincides
with the Alexander polynomial $\Delta^C(\tt)$ of the link
$L=C\cap S^3_{\varepsilon}$ of the curve $C=\bigcup_{i=1}^r C_i$ (see \cite{IJM2003}).

{\bf 3.}
The {\em generalized Poincar\'e series} $P_g(\tt; q)$ defined
in \cite{Monats} as the
same  integral as in equation (\ref{eq1}) with respect to the
generalized
Euler characteristic $\chi_g$ with values in the Grothendieck
ring $K_0(\Var)$
of quasi-projective varieties localized at $\L= [\C]$:
\begin{equation}\label{gen}
P_g(\tt; q) = \int_{\P \OO_{X,0}} \tt^{\,\nunu(g)} d\chi_g\,,
\end{equation}
where $q = \L^{-1}$.
(One has $P_g(\tt; q)\in \Z[q][[\seq t1r]]$: see \cite{Monats}.)

The generalized Poincar\'e series is related with
zeta-functions of curves defined over finite fields (see
\cite{MZ}).
It is conjectured that the generalized Poincar\'e series corresponding to a collection of plane curve valuations is
closely related with the generating series of the Heegard-Floer
homologies of the link $L = C\cap S^3_{\varepsilon}$ (see \cite{GN}).

In \cite{Monats} it was explained that the Hilbert function of a filtration defines
the Poincar\'e series
and the generalized Poincar\'e series of it. On the other hand the Poincar\'e
series does not define,
in general, the Hilbert function and the generalized Poincar\'e series: see Example
 in \cite{Monats} taken from \cite{CDK}.

Here we discuss relations between the Hilbert function and  the
generalized Poincar\'e series
in the general setting, i.e. for order functions with
values in ordered semigroups.
We  show that, if the Hilbert function is defined (i.e. if all the subspaces
$J(\vv)$ have finite codimensions),
these two invariants are equivalent in the sense that each
of them determines the other one.
Thus they keep
the same information about the filtration. However this
information
is encoded in different ways. In particular, as it was mentioned,
for plane curve valuations the generalized Poincar\'e series
is in the form related to classical knot invariants of the
corresponding link.

\medskip
Now let $(X,0)$ be the plane $(\C^2,0)$ and let the order functions $\nu_i$ be
valuations on the ring $\OO_{\C^2,0}$
of germs of functions in two variables. The classification of the valuations on
$\OO_{\C^2,0}$ can be found in
\cite{Spiv} (see also \cite{IJM2010}).

It is well-known that the Alexander polynomial of a plane curve singularity,
coinciding with the Poincar\'e series of
the corresponding collection of curve valuations of rank one, determines the
topology of the curve singularity:
\cite{yamamoto} (see also \cite{FAOM}). In \cite{FAOM} it was
shown that the Poincar\'e series determines the topology
of a collection of divisorial valuations in the plane, i.e. the topology of its
minimal resolution. (In \cite{IJM2010}
this was generalized to an arbitrary collection of valuations on the ring
$\OO_{\C^2,0}$ does not containing
curve valuations of rank one.) On the other hand, an example in \cite{FAOM} shows
that the Poincar\'e series
does not determine the topology of an arbitrary collection of valuations on
$\OO_{\C^2,0}$.

Here we show that the generalized Poincar\'e series of a collection of plane
valuations determines the topology
of them. In particular this implies that the Hilbert function of a collection of
plane valuations determines its topology.
One can say that this is the main result of the paper formulated
in ``traditional" terms.

\section{Hilbert function and generalized Poincar\'e series}\label{sec1}

Let $S$ be a totally ordered abelian semigroup with the minimal element equal to zero. An
order function on $\OO_{X,0}$ with values in $S$ is  a map
$\nu: \OO_{X,0}\to S\cup \{\infty\}$ such that
\begin{enumerate}
\item[1)] $\nu(\lambda g) = \nu (g)$ for $\lambda\in \C^*$, $\nu(0)=\infty$;
\item[2)] $\nu(g_1 + g_2)\ge \min \{\nu(g_1), \nu(g_2)\}$.
\end{enumerate}

\begin{remarks}
{\bf 1)} If $\nu(g_1 g_2)= \nu(g_1)+\nu (g_2)$, the order function $\nu$ is a
valuation. In this section the property to be a valuation will not be essential.

{\bf 2)} All the content of the section is valid for order functions (and thus for
filtrations) on arbitrary complex vector spaces, e.g. on a module over $\OO_{X,0}$.
We describe the situation in $\OO_{X,0}$, not trying to formulate in the most general
context, for convenience.
\end{remarks}

A (finite) collection $\nuv = (\seq{\nu}1r)$ of order functions on $\OO_{X,0}$ with
values in the semigroups $\seq{S}1r$ respectively defines the $r$-index filtration
$$
J(\vv) = \{g\in \OO_{X,0} : \nu_i(g)\ge v_i \mbox{ for all } i=1,\ldots,r\}\,,
$$
where $\vv=(\seq v1r)\in \S = S_1\times \cdots \times S_r$. (The semigroup
$\S$ is partially ordered by
$\vv=(\seq v1r)\ge \ww=(\seq w1r)$ if and only if $v_i\ge w_i$ for all $i=1,\ldots,r$).

\begin{definition}
The {\em Hilbert function} $h$ of the filtration $\{J(\vv)\}$ is the function on $\S$ defined by
$$
h(\vv) = \dim \OO_{X,0}/J(\vv)\; .
$$
\end{definition}

We permit $h(\vv)$ to be equal to infinity.

The set $\Z[[\S]]$ of power series on the semigroup $\S$ is the set of formal
expressions of the form $\sum\limits_{\vv\in \S}a_{\vv} \tt^{\vv}$
($\vv=(\seq v1r)\in \S$, $a_{\vv}\in \Z$). One can write
$\tt^{\vv}$ as $t_1^{v_1}\cdots t_r^{v_r}$.
The set $\Z[[\S]]$ is a free abelian group. It is a ring if in each semigroup $S_i$,
each element $a\in S_i$ has only a finite number of different
representations as the sum
$a=a_1+a_2$ of two elements of $S_i$. This takes place, in particular, if $\nu_i$ is
a valuation on $\OO_{X,0}$ and $S_i$ is the semigroup of its values.

\begin{definition}
The {\em Hilbert series} $\widetilde{H}(\tt)$ of the filtration $\{J(\vv)\}$ is the generating series
of the values of the Hilbert function:
$$
\widetilde{H}(\tt) = \sum_{\vv\in \S} h(\vv)\;\tt^{\,\vv}\; .
$$
The Hilbert series is defined if all the values $h(\vv)$ are finite.
\end{definition}

For $I\subset I_0 = \{1,\ldots,r\}$ and $\vv\in \S$, let
$$
J^{+I}(\vv):=\{g\in J(\vv): \nu_i(g)>v_i \mbox{ for all } i\in I\}\,,
$$
$J^{+}(\vv):=J^{+I_0}(\vv)$.

\begin{definition}
The {\em Poincar\'e series} $P(\tt)$ of the filtration $J(\vv)$ is the element of
$\Z[[\S]]$ defined by
$$
P(\tt)=\sum_{\vv\in\S}\left(\sum_{I\subset  I_0}(-1)^{\#
I}\dim{J^{+I}(\vv)/J^{+}(\vv)}\right)\tt^{\,\vv}\,.
$$
\end{definition}

One can show that, for $S_i=\Z_{\ge 0}$, $i=1,\ldots,r$, this
definition coincides with (\ref{eq0}): see, e.g., \cite{IJM2003}.
The Poincar\'e series $P(\tt)$ is defined if all the factor spaces $J(\vv)/J^{+}(\vv)$ are finite
dimensional.

Let $Y(\vv):=\{g\in\OO_{X,0}:\nunu(g)=\vv\}$. One has
\begin{equation}\label{exact}
Y(\vv)=J(\vv)\setminus \bigcup_{i=1}^r J^{+\{i\}}(\vv)\,.
\end{equation}
Let $F_{\vv}= Y(\vv)/J^{+}(\vv)$. The union of the spaces
$F_{\vv}$ as a graded space, i.e. a space with the components
numbered by the elements of $\S$, is called {\em the extended
semigroup of the filtration} $\{J(\vv)\}$: see, e.g., \cite{Monats}.
The components $F_{\vv}$ of the extended semigroup are called
its {\em fibres}. The space $\coprod_{\vv\in\S}F_{\vv}$ is really
a semigroup if
all the order functions $\nu_i$ are valuations. However we keep
the name for the general case as well.

Let $S$ be the subset of $\S$ consisting of $\vv$ with $Y(\vv)\ne\emptyset$.
If $\nu_i$ are valuations, the set $S$ is a semigroup: the {\em semigroup of values}
of the collection $(\nu_1, \ldots, \nu_r)$.

One can see that the coefficient at $\tt^{\,\vv}$ in the
Poincar\'e series $P(t)$ is equal to the Euler characteristic
$\chi(\P F_{\vv})$ of the projectivization
$\P F_{\vv}=(F_{\vv}\setminus \{0\})/\C^{*}$ of the fibre of the extended
semigroup.

Let $K_0(\Var)$ be the Grothendieck ring of
quasi-projective varieties. It is generated by classes $[X]$ of
such varieties subject to the relations:\newline
1) if $X_1\cong X_2$, then $[X_1]=[X_2]$;\newline
2) if $Y$ is Zariski closed in $X$, then $[X]=[Y]+[X\setminus Y]$\newline
(the multiplication is defined by the Cartesian product).
Let $\LLL=[\C]$ be the class of the complex affine line in
$K_0(\Var )$. The natural map $\Z[\LLL]\to K_0(\Var)$
is an embedding.

Since the projectivization $\P F_{\vv}$ of the fibre $F_{\vv}$ of the extended semigroup
is the complement to an arrangement of projective subspaces in a projective space,
its class $[\P F_{\vv}]$ in the Grothendieck ring $K_0(\Var)$ is a polynomial in $\LLL$:
$[\P F_{\vv}]=p_{\vv}(\LLL)$.

Let $K_0(\Var)_{(\LLL)}$ be the localization of the Grothendieck ring $K_0(\Var)$
at $\LLL$ and let $q=\LLL^{-1}\in K_0(\Var)_{(\LLL)}$. The natural map $\Z[q,q^{-1}]\to K_0(\Var)_{(\LLL)}$
is an embedding.

\begin{definition}
The {\em generalized Poincar\'e series} of the filtration
$\{J(\vv)\}$ is the element of $\Z[q][[\S]]$ defined by
\begin{equation}\label{genPoincare}
P_g(\tt; q)
=\sum_{\vv\in\S}q^{h^{+}(\vv)-1}p_{\vv}(q^{-1})\;\tt^{\,\vv}\,,
\end{equation}
where $h^{+}(\vv)=dim \OO_{X,0}/J^{+}(\vv)$.
\end{definition}

In \cite{Monats}, there was described a relation of this
definition with a motivic measure on the projectivization
$\P\OO_{X,0}$ of the ring $\OO_{X,0}$ and an integral with
respect to it (for, so called, finitely determined order
functions). This identifies this definition with the one in (\ref{gen}).

Assume that, for each $\vv\in \S$, $\dim \OO_{X,0}/J(\vv)< \infty$ (and therefore
the Hilbert series and the generalized Poincar\'e series are defined).

\begin{proposition}
The Hilbert function, the Hilbert series and the generalized Poincar\'e series are equivalent
in the sense that each of them determines the other two.
\end{proposition}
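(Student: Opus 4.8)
The plan is to reduce the statement to the single nontrivial equivalence between the generalized Poincar\'e series $P_g(\tt;q)$ and the Hilbert function $h$. The equivalence $h\Leftrightarrow\widetilde H$ is immediate, since $\widetilde H$ is the generating series of $h$, so each recovers the other coefficient by coefficient. It is convenient to also introduce the $q$--weighted series $H_q(\tt;q)=\sum_{\vv\in\S}q^{h(\vv)}\tt^{\,\vv}$, which is likewise equivalent to $h$ because each of its coefficients is the single monomial $q^{h(\vv)}$. Thus it suffices to establish $P_g\Leftrightarrow h$.

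For the forward direction I would compute the coefficient $c_{\vv}(q):=q^{h^+(\vv)-1}p_{\vv}(q^{-1})$ of $\tt^{\,\vv}$ in $P_g$ in terms of Hilbert data alone. Writing $A_{\vv}=J(\vv)/J^{+}(\vv)$ and $L_i=J^{+\{i\}}(\vv)/J^{+}(\vv)$, the fibre $F_{\vv}$ is the complement $A_{\vv}\setminus\bigcup_{i=1}^r L_i$ of a linear subspace arrangement. The $\C^{*}$--fibration $F_{\vv}\to\P F_{\vv}$ gives $[F_{\vv}]=(\LLL-1)\,p_{\vv}(\LLL)$, while inclusion--exclusion in $K_0(\Var)$, together with $\bigcap_{i\in I}L_i=J^{+I}(\vv)/J^{+}(\vv)$ of dimension $h^{+}(\vv)-h^{+I}(\vv)$, yields $[F_{\vv}]=\sum_{I\subset I_0}(-1)^{\#I}\LLL^{\,h^{+}(\vv)-h^{+I}(\vv)}$. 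Substituting $\LLL=q^{-1}$ and simplifying gives the key identity $c_{\vv}(q)=\frac{1}{1-q}\sum_{I\subset I_0}(-1)^{\#I}q^{h^{+I}(\vv)}$.

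It then remains to express the numbers $h^{+I}(\vv)$ through $h$. I would prove $J^{+I}(\vv)=\bigcup\{J(\ww):w_i>v_i\ (i\in I),\ w_j=v_j\ (j\notin I)\}$; this is a directed increasing union of subspaces of the finite--codimension space $J(\vv)$, so by finiteness it stabilizes and $h^{+I}(\vv)=\min\{h(\ww):w_i>v_i\ (i\in I),\ w_j=v_j\ (j\notin I)\}$ is determined by $h$. This min--formula is exactly what absorbs the general ordered--semigroup subtlety: no immediate successor of $v_i$ in $S_i$ is needed, only local finiteness of the jumps of $h$, which finite codimension guarantees. Combined with the key identity this gives $h\Rightarrow\{h^{+I}\}\Rightarrow P_g$. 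Summing the key identity over $\vv$ (passing, as in the Introduction, to the Laurent series indexed by $\Z^r$ and reindexing $\vv\mapsto\vv+\sum_{i\in I}e_i$ along the jumps) I expect to obtain, at least when each $S_i=\Z_{\ge0}$, the compact relation $(1-q)\,P_g(\tt;q)=\prod_{i=1}^r(1-t_i^{-1})\,H_q(\tt;q)$, the $q$--analogue of $\tt\,L(\tt)=(1-\tt)H(\tt)$; in general its role is played by the min--formula above.

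For the converse I would read $h$ off $P_g$. For $\vv\in S$ one checks that $A=\{i:h^{+\{i\}}(\vv)=h(\vv)\}=\emptyset$ and that $\{I:h^{+I}(\vv)=h(\vv)\}$ is the power set of $A$; hence the lowest--degree term of $\sum_I(-1)^{\#I}q^{h^{+I}(\vv)}$ is $q^{h(\vv)}$ with coefficient $1$ and does not cancel, so $c_{\vv}(q)$ is a nonzero polynomial with $\min\deg_q c_{\vv}=h(\vv)$, whereas $c_{\vv}=0$ precisely when $\vv\notin S$. Thus $P_g$ recovers the value set $S$ and $h|_S$, and the compact relation inverts to recover $H_q$, hence $h$. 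The hardest part, which I would treat with care, is exactly this inversion in the general semigroup setting: controlling the possible cancellations in $\sum_I(-1)^{\#I}q^{h^{+I}(\vv)}$ and reconstructing $h$ on all of $\S$ rather than merely on $S$, where once again finite codimension and the min--formula, not any successor structure, are what make the reconstruction go through.
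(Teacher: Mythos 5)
Your proposal is correct, and its skeleton is the paper's own: $h$ determines $P_g$ through the defining formula (\ref{genPoincare}), and conversely, for $\vv$ in the value set $S$ the coefficient of $\tt^{\,\vv}$ in $P_g(\tt;q)$ is a nonzero polynomial in $q$ whose lowest degree equals $h(\vv)$, after which $h$ is extended from $S$ to all of $\S$. Where you differ is in the level of detail, mostly to your credit: the paper disposes of the forward direction in one sentence (the arrangement computation being delegated to \cite{Monats}), whereas you make it self-contained via the identity $(1-q)\,c_{\vv}(q)=\sum_{I\subset I_0}(-1)^{\# I}q^{h^{+I}(\vv)}$, which is correct ($[F_{\vv}]=(\LLL-1)p_{\vv}(\LLL)$ by the $\C^{*}$-fibration, plus inclusion--exclusion over $\bigcap_{i\in I}L_i=J^{+I}(\vv)/J^{+}(\vv)$), and your directed-union stabilization giving $h^{+I}(\vv)=\min\{h(\ww): w_i>v_i \mbox{ for } i\in I,\ w_j=v_j \mbox{ for } j\notin I\}$ is exactly the right substitute for successor elements in a general ordered semigroup. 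Your no-cancellation check is also sound: for $\vv\in S$ and $I\ne\emptyset$ one has $h^{+I}(\vv)>h(\vv)$, since $h^{+\{i\}}(\vv)=h(\vv)$ would mean $J^{+\{i\}}(\vv)=J(\vv)$ and hence $Y(\vv)=\emptyset$; this justifies what the paper merely asserts. The compact relation $(1-q)P_g(\tt;q)=\prod_i(1-t_i^{-1})H_q(\tt;q)$ (in the Laurent convention over $\Z^r$) is likewise correct and a genuine $q$-analogue of $\tt\, L(\tt)=(1-\tt)H(\tt)$ from the Introduction.

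The one step you leave genuinely open is the last one: recovering $h$ on $\S\setminus S$ in the general semigroup setting, where inverting the compact relation is unavailable and the min-formula you point to is the one for $h^{+I}$, which runs in the wrong direction (it expresses the $P_g$-data through $h$, not $h$ through data visible in $P_g$). The paper closes this with a single formula: for $\vv\in\S\setminus S$, $h(\vv)=\min\{h(\ww):\ww\ge\vv,\ \ww\in S\}$. Note that your own tools prove it. The minima $m_i=\min\{\nu_i(g):g\in J(\vv)\}$ exist by the same finite-codimension descent you used for the directed unions; choosing $g\in J(\vv)$ outside the finitely many proper subspaces $\{g\in J(\vv):\nu_i(g)>m_i\}$ (a vector space over $\C$ is not a finite union of proper subspaces) gives $\ww:=\nunu(g)=(m_1,\ldots,m_r)\in S$ with $\ww\ge\vv$ and $J(\ww)=J(\vv)$, so the minimum is attained and equals $h(\vv)$. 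With that one lemma inserted, your argument is complete and agrees in substance with the paper's proof.
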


\begin{proof}
The Hilbert function determines the generalized Poincar\'e series by the equation
(\ref{genPoincare}). For $\vv\in S$, the coefficient at $\tt^{\vv}$ in the
generalized Poincar\'e series
$P_g(\tt; q)$ is not zero and the lower degree in $q$ of it is
equal to
$h(\vv)$. Therefore
the generalized Poincar\'e series  $P_g(\tt; q)$ determines
$h(\vv)$ for all $\vv\in S$.
For $\vv\in \S\setminus S$, $h(\vv)$ is equal to the minimum of $h(\ww)$ for all
$\ww\ge \vv, \ww\in S$.
\end{proof}

In \cite{Monats}, there was considered one more generalized Poincar\'e series
corresponding to a filtration.

\begin{definition}
 The {\em generalized semigroup Poincar\'e series} of the
filtration $\{J(\vv)\}$ is the element of $\Z[\LLL][[\S]]$
defined by
\begin{equation}\label{gensemiPoincare}
 \widehat{P}_g(\tt; \L)= \sum_{\vv\in\S}[\P F_{\vv}]\tt^{\vv}
\left(=\sum_{\vv\in\S}p_{\vv}(\LLL)\tt^{\vv}\right)\,.
\end{equation}
\end{definition}

As in the generalized Poincar\'e series $P_g(\tt; q)$, the
coefficient at $\tt^{\,\vv}$
in the generalized semigroup Poincar\'e series
$\widehat{P}_g(\tt; \L)$ is different from zero
if and only if $\vv\in S$.

\section{Plane valuations and topology}\label{sec3}
The classification of plane valuations (i.e. valuations on the ring
$\OO_{\C^2,0}$) can be found in \cite{Spiv} or \cite{IJM2010}. We will follow the
assumptions and the terminology from \cite{IJM2010}.
In particular we assume that each valuation $\nu$ of rank one
is normalized by the requirement
that $\min\limits_{f\in \m}\nu(f)=1$, where $\m$ is the maximal ideal
of $\OO_{\C^2,0}$. For a curve valuation of rank two (type II.1
in \cite{IJM2010}) this
will be the normalization of the second component. For the
remainning valuations of rank two (exceptional curve
valuations: case II.2 of \cite{IJM2010})
we shall demand this for the first component of the value.

The notion of the minimal resolution of a valuation was described in \cite{IJM2010}. The
minimal resolution of a divisorial valuation is the minimal modification by a
(finite) sequence of blowing-ups which produces the corresponding divisor. A rank 1
plane valuation can be defined as the limit of a sequence of divisorial valuations:
see \cite{IJM2010}. The minimal resolution of this valuation is the projective limit of
the corresponding minimal resolutions

\begin{remark}
Strictly speaking this is not a resolution in the usual sense
(a modification of the plane) since the corresponding
map is not proper. For example the minimal resolution of a curve valuation of rank 1
is the resolution of the curve followed by the infinite sequence of blowing-ups at
the intersection points of the strict transform of the curve with the exceptional
divisor. The minimal resolution of a curve valuation of rank 2 is the minimal
resolution of the corresponding curve valuation of rank 1. The minimal resolution of
an exceptional curve valuation (of rank 2) is the minimal resolution of the
corresponding divisorial valuation (the first component of the considered one)
followed by the additional blowing-up at the corresponding intersection point.
\end{remark}

The minimal resolution of  a finite collection of valuations is
the projective
limit of the corresponding (multi-index) system of modifications. It is simply the
birational join of the minimal resolution of all the valuations.

The minimal resolution of a collection of plane valuations can be
described by its dual graph $G$. Vertices of $G$ correspond to the irreducible
components of the exceptional divisor. The set $\Gamma$ of vertices is a partially
ordered set: a component $E_{\sigma}$ of the exceptional divisor is greater than a
component $E_{\delta}$ if any modification containing $E_{\sigma}$ contains
$E_{\delta}$ as well. Two vertices of $G$ are connected by an edge if the
corresponding components of the exceptional divisor intersect. The graph $G$ is a tree.

\begin{remark}
There is another natural partial order on the set of vertices of the graph $G$
defined by the position of a vertex with respect to the root, i.e. to the vertex
corresponding to the first born component
of the exceptional divisor. A vertex $\sigma$ of the graph $G$ is greater than a
vertex $\delta$ if $\delta$ lies on the geodesic between the root and the vertex $\sigma$.
This partial order will be used in the proofs below, but not in the following definition.
\end{remark}

\begin{definition}
Two collections of plane valuations are called {\em topologically equivalent} if the dual
graphs of their minimal resolutions are isomorphic as graphs with partially ordered
sets of vertices and with marked vertices corresponding to divisorial and
exceptional curve valuations.
\end{definition}

\begin{remark}
For plane curve valuations this definition coincides with the
usual definition of the topological equivalence of plane curve
singularities.
According to this definition the curve valuations and the formal curve
valuations are topologically indistinguishable. They have also equal Hilbert functions
and generalized Poincar\'e series. In fact curve valuations and formal curve
valuations constitute one single class of valuations on the ring of formal power
series in two variables.
\end{remark}

For convenience we shall exclude curve rank 2 valuations since, strictly speaking,
the Hilbert function and the generalized Poincar\'e series are not defined for them
(see the Remark after the Proof of Theorem 1).

\begin{theorem}\label{theo1}
Let $(\seq{\nu}1r)$ be a collection of plane valuations does not containing curve
valuations of rank 2. Then the generalized Poincar\'e series $P_g(\tt,q)$ determines
the topological type of the collection.
\end{theorem}

Together with Proposition 1 this implies the following statement.

\begin{corollary}
The Hilbert function of a collection of plane valuations determines the topological
type of the collection.
\end{corollary}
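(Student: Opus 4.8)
The plan is to reduce the statement to the assertion that the \emph{generalized semigroup} Poincar\'e series $\widehat{P}_g(\tt;\L)$ determines the topology, and then to read the topology off from an A'Campo-type factorization of $\widehat{P}_g$ over the vertices of the dual graph. First I would recover $\widehat{P}_g(\tt;\L)$ from $P_g(\tt;q)$. By Proposition~1 the series $P_g(\tt;q)$ is equivalent to the Hilbert function $h$, and the value semigroups $S_i$ (hence the componentwise successor operation $\vv\mapsto\vv'$ on $\S$) are read off from $h$. Since each $\nu_i$ is a valuation one has $J^{+}(\vv)=J(\vv')$, so $h^{+}(\vv)=\dim\OO_{X,0}/J^{+}(\vv)=h(\vv')$ is determined by $h$. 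Dividing the coefficient $q^{h^{+}(\vv)-1}p_{\vv}(q^{-1})$ of $P_g(\tt;q)$ by $q^{h^{+}(\vv)-1}$ and substituting $q^{-1}=\L$ recovers each class $[\P F_{\vv}]=p_{\vv}(\L)$, and therefore the full series $\widehat{P}_g(\tt;\L)=\sum_{\vv\in\S}p_{\vv}(\L)\,\tt^{\vv}$. Thus it suffices to prove that $\widehat{P}_g(\tt;\L)$ determines the topological type.

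Next I would establish, via the motivic measure on $\P\OO_{\C^2,0}$ and the integral realization of $P_g$ recalled after equation~(\ref{genPoincare}) (computing it through the minimal resolution), a factorization of $\widehat{P}_g(\tt;\L)$ as a product indexed by the vertices $\sigma$ of the dual graph $G$, in which each $\sigma$ contributes a factor built from the class $[E^{\circ}_{\sigma}]$ of the open part $E^{\circ}_{\sigma}$ of the corresponding exceptional component (with its intersection points and the strict transforms removed) and from the value vector $\mm_{\sigma}=(m_{\sigma,1},\ldots,m_{\sigma,r})$ recording the values the $\nu_i$ take on $E_{\sigma}$. The essential point, and the reason the generalized series succeeds where the ordinary one fails, is the behaviour along the infinite free tails created by the rank~$1$ curve valuations: for a valence~$2$ vertex on such a tail one has $E^{\circ}_{\sigma}\cong\C^{*}$, so $\chi(E^{\circ}_{\sigma})=0$ and its factor is trivial in the ordinary Poincar\'e series, whereas $[E^{\circ}_{\sigma}]=\L-1\neq 0$ in $K_0(\Var)$, so the vertex remains visible in $\widehat{P}_g(\tt;\L)$. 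This is precisely the additional information that separates collections which the ordinary Poincar\'e series cannot distinguish (cf.\ the example in \cite{FAOM}), while the cases without rank~$1$ curve valuations reduce to the known result of \cite{IJM2010}.

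Finally I would reconstruct the topology from the factorization. From the product one reads off the multiset of value vectors $\mm_{\sigma}$ together with the class attached to each; the $\L$-degree of that class gives $\chi(E^{\circ}_{\sigma})$, hence the valence of $\sigma$, and thereby identifies end, rupture, divisorial and exceptional curve vertices, which yields the markings. Using the partial order by position with respect to the root together with the proximity (continued-fraction) relations among the $\mm_{\sigma}$, one peels the vertices off inductively and rebuilds $G$ as a partially ordered tree with its marked vertices. The main obstacle is this last step: one must show that the factorization admits no hidden cancellation and that value vectors which for divisorial collections alone may coincide for distinct vertices are nonetheless separated by the extra $\L$-grading, so that the reconstruction is unambiguous; additional care is required because $G$ is infinite along the curve-valuation tails, so the inductive peeling must be controlled degree by degree in $\tt$.
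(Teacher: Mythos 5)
Your opening reduction is sound and parallels the paper: by Proposition~1 the Hilbert function and $P_g(\tt;q)$ determine each other, $h$ determines $h^{+}(\vv)$ (successors in the value sets exist under the finite-codimension hypothesis, which is implicitly what the proof of Proposition~1 uses), and stripping the factor $q^{h^{+}(\vv)-1}$ from each coefficient recovers $p_{\vv}$ and hence $\widehat{P}_g(\tt;\L)$. The genuine gap is everything after that. Your central tool --- an A'Campo-type factorization of $\widehat{P}_g(\tt;\L)$ as a product over the vertices of the dual graph --- is asserted, not proved, and it is not in the paper. You never specify the factors, and for the exponents to be the classes $[E^{\circ}_{\sigma}]\in K_0(\Var)$ one needs a power structure over the Grothendieck ring, machinery the paper does not introduce; such formulas are known in special settings, but you would have to establish one for arbitrary collections of plane valuations, including exceptional curve valuations of rank two and the infinite graphs created by rank-one curve valuations, where the product is infinite and convergence must be controlled. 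Worse, the final step --- reconstructing the partially ordered marked tree from the multiset of pairs $(\mm_{\sigma},[E^{\circ}_{\sigma}])$ --- is the entire content of the theorem, and you concede it is ``the main obstacle'' rather than proving it. The Euler-characteristic shadow of exactly this data provably fails: the example in \cite{FAOM} gives topologically distinct collections with equal ordinary Poincar\'e series, so the claim that the $\L$-refinement removes all ambiguity cannot be waved through by ``no hidden cancellation.'' Note also that even in the purely divisorial situation the paper's proof of Theorem~2 needs data beyond vertex multiplicities to pin down isolated branches (the curvette value $\nu_1(g_{\alpha})$, detected as the first semigroup point on a ray), which is evidence that inductive peeling from the multiset alone would not go through as stated.

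By contrast, the paper's route needs no factorization at all: the Corollary is immediate from Proposition~1 together with Theorem~1, and Theorem~1 is proved by (a) the projection formula $P_{g,\{\nu_i\}_{i\in I}}(\tt;q)=P_g(\tt;q)|_{t_j=1 \text{ for } j\notin I}$, which yields the series of every subcollection; (b) for a single valuation, $P(t)=P_g(t;1)$ together with the result of \cite{IJM2010} that the ordinary Poincar\'e series of one plane valuation determines its dual graph; and (c) for each pair, the semigroup of values --- read off as the support of $P_g(t_1,t_2;q)$ --- determines the splitting vertex as the minimal element with $v_1^0=v_2^0$ admitting a different semigroup element with $v_1=v_1^0$ or $v_2=v_2^0$ (Delgado's criterion, extended to divisorial and general valuations via curvettes and limits). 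This uses only the support and the $q=1$ specialization of $P_g$, sidestepping both of your missing steps. Your plan, if completed, would yield a structurally finer statement (an explicit motivic factorization of $\widehat{P}_g$), but as submitted the two load-bearing steps are absent, so it does not constitute a proof.
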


\begin{Proof} {\bf of Theorem \ref{theo1}.}
As it was explained in Section~\ref{sec1}, the generalized Poincar\'e
series can be defined as the integral of the monomial function
$\tt^{\,\nunu}$ over the projectivization $\P\OO_{X,0}$ of the ring
$\OO_{X,0}$ with respect to the generalized Euler characteristic
(with values in the localization $K_0(\Var)_{(\LLL)}$ of the Grothendieck ring
$K_0(\Var)$ at $\LLL=q^{-1}$).

Let $\M$ be the completion of the localization
$K_0(\Var)_{(\LLL)}$ of the Grothendieck ring $K_0(\Var)$
with respect to the dimension filtration: see, e.g., \cite{DL}.
One has the natural map $K_0(\Var)_{(\LLL)} \to \M$.
Its restriction to $\Z[[q]]$ is an embedding.
We shall consider coefficients of the Poincar\'e series as elements of $\M$.
Under this assumption the
formula for the generalized Poincar\'e series  as an integral
gives the following statement. Let $I$ be a subset of the set
$I_0=\{1,\ldots,r\}$ of indices and let
$P_{g,\{\nu_i\}_{i\in I}}(\tt; q)$ be the generalized Poincar\'e
series of the subcollection $\{\nu_i\}_{i\in I}$ of the
collection $(\seq{\nu}1r)$. Then one has the following
``projection formula":
$$
P_{g,\{\nu_i\}_{i\in I}}(\tt; q) = P_{g}(\tt; q)|_{t_j=1 \mbox{ for
} j\notin I}\;.
$$
This implies that the generalized Poincar\'e series of a
collection of plane valuations determines the generalized
Poincar\'e series of any subcollections, in particular, the
generalized Poincar\'e series of each of them. On the other hand,
the generalized Poincar\'e series of a valuation determines the
usual Poincar\'e series of it:
$$
P(t) = P_g(t; 1)\;.
$$

In \cite{IJM2010} if was shown that the Poincar\'e series of a plane
valuation determines the dual graph of the minimal resolution of
it. Therefore, in order to describe the dual graph of the minimal
resolution of a collection of plane valuations, one has to
determine the splitting point $\alpha$ (i.e. the last
commond point counted
from the root of the tree) of the resolution trees
for each pair, say, $\nu_1$ and  $\nu_2$ of the valuations.
According of the description above the generalized Poincar\'e
series $P_g(\tt; q)$ determines the generalized Poincar\'e series
$P_{g}(t_1,t_2;q)$ of the pair $(\nu_1, \nu_2)$.

In turn the generalized Poincar\'e series $P_g(t_1,t_2; q)$
determines the semigroup of values of the pair $(\nu_1,\nu_2)$:
this is simply the set of values $(v_1,v_2)$ with non-zero
coefficients at the corresponding monomials. Now the splitting
vertex of the resolution graph $G$ corresponds to the minimal
element $(v^0_1,v^0_2)$ in the semigroup of values of the pair
$(\nu_1,\nu_2)$ such that $v_1^0=v_2^0$ (pay atention to the
normalization of the valuations described at the beginning of
the section) and there exists an element
$(v_1,v_2)$ of the
semigroup different from
$(v^0_1,v^0_2)$ and such that either $v_1=v^0_1$ or $v_2=v_2^0$.
The above fact for curve valuations of rank one can be found in
\cite{Del}, Remark (3.20). From the case of curve valuations
it is easy to extend it to divisorial valuations (considering the valuations defined by the corresponding curvettes,
i.e., the pull-downs of non-singular curves transversal to the divisors)
and, taking into account that any valuation could be regarded as a limit of divisorial
ones, one can deduce the result for the general case.

This proves the statement.
\end{Proof}

\begin{remark}
It is not difficult to see that one can permit curve rank 2
valuations in Theorem \ref{theo1} and in the Corollary if one
assumes that the Hilbert function $h(\vv)$ is defined with
infinite values at the elements $\vv$ with non-zero first
component of such a valuation and the coefficient at the
corresponding monomial $\tt^{\vv}$ in the generalized Poincar\'e
series is equal to zero. (The last assumption means that
$q^{\infty}$ should be considered as $0$.)
\end{remark}

\medskip

Though it is not clear whether the generalized
semigroup Poincar\'e series of a filtration determines its
Hilbert function (ans thus the generalized Poincar\'e series),
for plane valuations one can show that they are equivalent. This
follows from the following statement (together with the fact
that, for plane valuations, all three invariants: the Hilbert
function and the both generalized Poincar\'e series are
invariants of the topological type).

\begin{theorem}\label{theo2}
The generalized semigroup Poincar\'e series
$\widehat{P}_{g}(\tt;\L)$ determines the topological type of a
collection $(\seq{\nu}1r)$ of plane valuations.
\end{theorem}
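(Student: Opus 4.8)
The plan is to reproduce the architecture of the proof of Theorem~\ref{theo1}: to recover the dual graph $G$ I would determine separately the minimal resolution of each individual valuation $\nu_i$ and the splitting vertex of each pair $(\nu_i,\nu_j)$, and then assemble these into the tree $G$ with its partial order and its marks at divisorial and exceptional curve valuations. Two pieces of information pass cleanly from $\widehat{P}_g$, and I would set these down first.

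The first is the support. Since $p_{\vv}(\L)=[\P F_{\vv}]$ vanishes exactly when $\P F_{\vv}=\emptyset$, the set of $\vv$ occurring with nonzero coefficient in $\widehat{P}_g(\tt;\L)$ is precisely the semigroup of values $S$. Projecting $S$ onto a pair of coordinates $(i,j)$ gives the semigroup of values of $(\nu_i,\nu_j)$ — a purely set-theoretic projection of the support, requiring no projection formula for the series. The splitting vertex is then read off from this pair-semigroup exactly as in Theorem~\ref{theo1}, as the minimal diagonal element $(v^0_1,v^0_2)$, $v^0_1=v^0_2$, admitting a distinct semigroup element sharing one of its coordinates. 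Thus $\widehat{P}_g$ determines all splitting vertices. The second is the specialization $\widehat{P}_g(\tt;\L)|_{\L=1}=\sum_{\vv}\chi(\P F_{\vv})\,\tt^{\vv}=P(\tt)$, so that $\widehat{P}_g$ determines the ordinary Poincar\'e series of the whole collection. For a collection free of rank-one curve valuations this already fixes the topology by the theorem of \cite{IJM2010} quoted above, and the proof is complete.

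The remaining task, and the main obstacle, is to recover the minimal resolution of each single valuation when rank-one curve valuations are present, where $P(\tt)$ is insufficient because of Euler-characteristic cancellations. In Theorem~\ref{theo1} the single-valuation dual graph was obtained by projecting $P_g$ onto the variable $t_i$ (setting the other $t_j=1$ in the completion $\M$) and specializing $q=1$; this relied on the projection formula, valid for $P_g$ because that series is a motivic integral and hence additive. For $\widehat{P}_g$ this route is blocked: its coefficients are the projective classes $[\P F_{\vv}]$, and projectivization is not additive, so in general $[\P F^{\{i\}}_{v}]\neq\sum_{\vv:\,v_i=v}[\P F_{\vv}]$. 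Equivalently, $\widehat{P}_g$ records the graded dimension $\dim J(\vv)/J^{+}(\vv)=\deg_{\L}p_{\vv}+1$ only at $\vv\in S$ and is blind to it at $\vv\notin S$, which is exactly the data a naive projection would need. I expect this non-additivity to be the crux.

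To get past it I would use input special to plane valuations rather than a formal projection, namely a product expansion of $\widehat{P}_g$ over the vertices of the resolution tree. Each vertex $\sigma$ should contribute a factor built from the monomial $\tt^{\,\mm_{\sigma}}$ of its multiplicity vector and from the class of the punctured component $E_{\sigma}^{\circ}$ (a polynomial in $\L$ recording the valence of $\sigma$), so that factoring $\widehat{P}_g$ returns the vertices, their multiplicities and valences, hence the geodesic from the root to each $\nu_i$ together with the marks distinguishing divisorial from exceptional curve valuations. Combining these individual paths with the splitting vertices found above reconstructs $G$ as a partially ordered, marked tree, i.e. the topological type. As a consistency check on the curve part, note that the coordinate projection of $S$ belonging to a rank-one curve valuation is the classical value semigroup of its irreducible branch, which already determines that branch's topology.
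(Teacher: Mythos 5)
Your first two moves are sound and coincide with the paper's own: since $\P F_{\vv}$ is the complement of an arrangement of projective subspaces in a projective space, $[\P F_{\vv}]\ne 0$ precisely when $\vv$ lies in the semigroup of values $S$, so the support of $\widehat{P}_g$ recovers $S$, its coordinate projections give the value semigroups of all subcollections, and the splitting vertices are then read off pairwise as in Theorem~\ref{theo1}. Likewise $\widehat{P}_g(\tt;\L)|_{\L=1}=P(\tt)$ is correct, and invoking \cite{IJM2010} to dispose of collections without rank-one curve valuations is a legitimate mild shortcut (the paper uses this specialization only for $r=1$). You have also correctly diagnosed why the projection-formula route of Theorem~\ref{theo1} is closed for $\widehat{P}_g$: its coefficients are classes of projectivized fibres, which are not additive under specializing variables to $1$.

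The gap is your third step. In the remaining case everything is made to rest on a conjectural multiplicative expansion of $\widehat{P}_g$ over the vertices of the resolution tree, with factors built from $\tt^{\,\mm_{\sigma}}$ and the class of $E_{\sigma}^{\circ}$, together with an implicit unique-factorization claim; none of this is established, and it is far from clear such an A'Campo-type formula holds for $\widehat{P}_g$ --- the classes $[\P F_{\vv}]$ of arrangement complements do not multiply along the tree the way Euler characteristics do, which is exactly the non-additivity you yourself flagged. The paper needs no product formula: it finishes entirely inside the support $S$. The only ambiguities left after the semigroup of a single valuation is known are (a) a curve valuation versus the divisorial valuation obtained by truncating the infinite tail of its graph, and (b) divisorial valuations differing only in the length of the last tail. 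For (a), with $r\ge 2$, $\nu_1$ is a curve valuation if and only if for some fixed $(v_2^0,\ldots,v_r^0)$ infinitely many points $(v_1,v_2^0,\ldots,v_r^0)$ lie in $S$. For (b), when the divisor $E_{\alpha}$ of a divisorial $\nu_1$ sits on an isolated branch of the tree, one uses a curvette $g_{\alpha}$: the semigroup of each pair $(\nu_1,\nu_j)$ lies in the half-plane $v_j\ge \frac{\nu_j(g_{\alpha})}{\nu_1(g_{\alpha})}\,v_1$ and its first point on the boundary ray is $(\nu_1(g_{\alpha}),\ldots,\nu_r(g_{\alpha}))$, so $S$ determines $\nu_1(g_{\alpha})$; by \cite{FAOM} the semigroup of $\nu_1$ together with this single curvette value determines its dual graph. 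Your closing consistency check (that the value semigroup of an irreducible branch determines its topology) substitutes for neither point: it does not decide whether a given coordinate belongs to a curve valuation or to its truncation, nor does it fix the tail lengths of divisorial valuations that accompany curve valuations in the collection.
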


\begin{proof}
The generalized semigroup Poincar\'e series determines the
semigroup of values of the collection $(\seq{\nu}1r)$ (as the
set of values $\vv$ with non-zero coefficients at $\tt^{\vv}$)
and therefore the semigroup of values of any subcollection of it.
In particular it determines the semigroup of values of each
valuation itself. In its turn this semigroup determines the
topological type of the valuation in all cases except the
following two:
\newline
1) The semigroup of values are the same for a plane curve
valuation and for a divisorial valuation whose resolution graph
is obtained from the resolution graph of the plane curve valuation
by a truncation of the last infinite tail.
\newline
2) Two divisorial valuations whose resolution graphs differs only
by the length of the last tail have the same semigroup of values.

If the collection consists only of one valuation ($r=1$), the
generalized semigroup Poincar\'e series determines the usual
Poincar\'e series which in turn determines the topological type.

If there are more than two valuations, then, say, the valuation
$\nu_1$ is a curve valuation if and only if there exists
$\seq{v^0}2r$ such that there are infinitely many elements in the
semigroup of values of the form
$(v_1,\seq{v^0}2r)$.
The second problem arises in a particular situation which will be
discuss later.

In order to restore the resolution graph one has to find the
splitting point for each pair of valuations. The method
to detect it from the semigroup of values of the pair of
valuations is described in the Proof of Theorem \ref{theo1}.

The only remaining problem (formulated as 2) above) is the
following one: assume that, say, $\nu_1$ is a divisorial
valuation and the corresponding divisor $E_{\alpha}$ is not on
the resolution tree of the collection $(\seq{\nu}2r)$.
(If it is on that tree, it corresponds to the splitting point
with the other valuations and was detected on the previous step.)
In this case the vertex corresponding to this divisor is on an
isolated branch of the resolution tree. Let $g_{\alpha}$ be a
function defining a curvette at the divisor $E_{\alpha}$ in
\cite{FAOM} it was explained how one can determine the dual graph
of the resolution of a divisorial valuation from its semigroup of
values plus the value  $\nu_1(g_{\alpha})$.

In this paragraph for each exceptional curve valuation (of rank
two) as $\nu_j(g)$ we use the first component of the value and
for each curve valuation of rank two we use the second component.
For each
$j=2,\ldots,r$, the semigroup of values of the pair
$(\nu_1,\nu_j)$ lies in the half-plane
$v_j\ge \frac{\nu_j(g_{\alpha})}{\nu_1(g_{\alpha})} v_1$
and the point
$(\nu_1(g_{\alpha}), \nu_2(g_{\alpha}), \ldots,
\nu_r(g_{\alpha}))$ is in the semigroup of values of the
collection. Moreover this point is the first point of the
semigroup of values on the ray
$$
v_j =  \frac{\nu_j(g_{\alpha})}{\nu_1(g_{\alpha})} v_1\; \
\mbox{ for } j=2,\ldots,r \;
$$
(see e.g. \cite{FAOM}).
This proves the statement.
\end{proof}

Adresses:

A. Campillo and F. Delgado:
Universidad de Valladolid, Instituto de Investigaci\'on en
Matem\'aticas (IMUVA), 47011 Valladolid, Spain.
\newline E-mail: campillo\symbol{'100}agt.uva.es, fdelgado\symbol{'100}agt.uva.es

S.M. Gusein-Zade:
Moscow State University, Faculty of Mathematics and Mechanics, Moscow, GSP-1, 119991, Russia.
\newline E-mail: sabir\symbol{'100}mccme.ru

\end{document}